\newtheorem{theorem}{Theorem}
\newtheorem{lemma}[theorem]{Lemma}
\newtheorem{corollary}[theorem]{Corollary}
\newenvironment{proof}{\prepf\rm}{\endprepf}
\newcommand{\qed}{\hfill$\Box$}
\newenvironment{remark}{\prerk\rm}{\endprerk}
\newenvironment{example}{\preex\rm}{\endpreex}
\begin{document}
\title{On the connectivity and independence number of power graphs of groups}
\author{P. J. Cameron\\
School of Mathematics and Statistics, University of St~Andrews\\
North Haugh, St Andrews, Fife KY16~9SS, UK\\
\texttt{pjc20@st-andrews.ac.uk}\\
and\\
S. H. Jafari\\
Faculty of Mathematical Sciences, Shahrood University of Technology\\
P. O. Box 3619995161-316, Shahrood, Iran\\
\texttt{shjafari55@gmail.com}}
\date{}

\maketitle

\begin{abstract}
Let $G$ be a  group. The power graph  of $G$ is a graph with vertex set $G$
in which two distinct elements $x,y$ are adjacent if one of them is a power
of the other.  We characterize all  groups whose power graphs have finite
independence number, show that they have clique cover number equal to
their independence number, and calculate this number.

The proper power graph is the induced subgraph of the power graph on the
set $G-\{1\}$. A group whose proper power graph is connected must be either
a torsion group or a torsion-free group; we give characterizations of some
groups whose proper power graphs are connected.

Keywords: Power graph, connectivity, independence number, cyclic group

MSC2010: 20D10, 05C25

\end{abstract}

%%%%%%%%%%%%%%%%%%%%%%%%%%%%%%%%%%%%%%%%%%%%%%%%%%%%%%%%%%%%%%
\section{Introduction}
We begin with some standard definitions from graph theory and group theory.

Let $G$ be a graph with vertex set $V (G)$. An \emph{independent set} is a set of vertices
in a graph, no two of which are adjacent; that is, a set whose induced
subgraph is null. The \emph{independence number} of a graph $G$ is the cardinality of the largest
independent set and is denoted by $\alpha(G)$. The \emph{chromatic number} of
$G$ is the minimum number of parts in a partition of $V(G)$ into independent
sets. Dually, a \emph{clique} is a set of vertices with all pairs adjacent; the
\emph{clique cover number} is the minimum number of parts in a partition of
$V(G)$ into cliques. Clearly we have
\begin{enumerate}\itemsep0pt
\item the clique number and chromatic number of a graph are equal to the
independence number and clique cover number of the complementary graph;
\item for any graph, the clique number is at most the chromatic number, and
the independence number is at most the clique cover number.
\end{enumerate}

The cyclic group of order $n$ is denoted by $C_n$. A group $G$ is called \emph{periodic} if every
element of $G$ has finite order. For every element $g \in G$, the order of $g$ is denoted by
$o(g)$. If there exists an integer $n$ such that for all $g \in G$, $g^n = 1$, where $1$ is
 the identity element of $G$, then $G$ is said to be of \emph{bounded exponent} and the \emph{exponent} of $G$ is
  $\exp(G)=\min \{n\mid g^n=1 \mbox{ for } g \in G\}$.

 A group $G$ is said to be \emph{torsion-free} if apart from the identity every element of $G$ has infinite
  order.

Let $p$ be a prime number.
The $p$-\emph{quasicyclic} group (known also as the \emph{Pr\"{u}fer group}) is the $p$-primary component
of $ \mathbb{Q}/\mathbb{Z}$. It is denoted by $C_{p^{\infty}}$.

The center of a group $G$, denoted by $Z(G)$, is the set of elements that commute with every
element of $G$. A group $G$ is called \emph{locally finite} if every finitely generated subgroup of
$G$ is finite. Obviously any  locally finite group is periodic. We say a group $G$ is \emph{locally center-by-finite}
 if for any finite subset $X$ of $G$, $[\langle X\rangle: Z(\langle X\rangle)]< \infty$.

A module $M$ is an \emph{essential extension} of a
 submodule $N$, if  any nonzero  submodule of $M$ has nonzero intersection with $N$. A module $M$ is a maximal essential extension of N if $K$ is an arbitrary 
  essential extension of $N$ and $M \subseteq K$, then $K=M$.

 The \emph{directed power graph} of $G$ is the directed graph $\vec{P}(G)$ which
takes $G$ as its vertex set with an edge from $x$ to $y$ if $y\ne x$ and
$y$ is a positive power of $x$. The (undirected) \emph{power graph}
of $G$, denoted $P(G)$,  takes $G$
as its vertex set with an edge between distinct elements if one is a positive power
of the other. For example, if $G=C_{p^\infty}$, then $P(G)$ is a countably
infinite complete graph. The \emph{proper power graph} of $G$ is
$P^*(G)=P(G)-{1}$.

Directed power graphs were first define by Kelarev
and Quinn \cite{r4} to study semigroups, and undirected power graphs of groups were introduced by
Chakrabarty \textit{et al.}~\cite{r2}. The reference \cite{r1} surveys a number of results on power
graphs. In \cite{ali},  Aalipour \emph{et al.}~provide some results on the finiteness of the independence number of
power graphs. They proved, if $G$ is an infinite nilpotent group, then
$\alpha(P(G))=\alpha(P^*(G)) <\infty $ if and only if
$G \cong C_{p^{\infty}} \times H$, where $H$ is a finite group and $p\nmid |H|$. They posed  the question, does this hold without assuming nilpotence?

In Section 2 of this paper we give an affirmative answer to this question,
and compute the independence number and clique cover number of the power
graphs of infinite groups of this form (they must be equal).

In Section 3, we consider the question of connectivity of proper power graphs
of infinite groups.

\section{Independence number}

In this section we give a description of groups whose power graph has finite
independence number, and show that, for such groups, the independence number
and clique cover number of the power graph are equal.

We need the following theorems \cite[Theorems 1 and 2]{ali}.

\begin{theorem}\label{ali1}
Let $G$ be a group satisfying $\alpha(P(G)) < \infty$. Then
\begin{enumerate}\itemsep0pt
\item $[G : Z(G)] < \infty$.
\item $G$ is locally finite.
\end{enumerate}
\end{theorem}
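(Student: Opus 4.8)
The key observation I would build everything on is that adjacency in $P(G)$ forces commutativity: if $x=y^k$ or $y=x^k$, then $x$ and $y$ commute. Hence \emph{any set of pairwise non-commuting elements of $G$ is an independent set in $P(G)$}, so the hypothesis $\alpha(P(G))<\infty$ imposes a finite upper bound on the size of every such set. With this in hand, part~(a) is essentially a citation: by B.~H.~Neumann's theorem on P.~Erd\H{o}s's problem (a group with no infinite set of pairwise non-commuting elements is centre-by-finite), we get $[G:Z(G)]<\infty$, since otherwise $G$ would contain an infinite set of pairwise non-commuting elements and hence an infinite independent set in $P(G)$.

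For part~(b) I would first rule out elements of infinite order: if $o(x)=\infty$ then $\langle x\rangle\cong\mathbb{Z}$, and the (infinitely many) primes form an independent set in $P(\langle x\rangle)\subseteq P(G)$, since no prime is a multiple of another. Thus $\alpha(P(G))<\infty$ forces $G$ to be periodic. Now I would combine periodicity with part~(a): for any finitely generated subgroup $H\le G$, the subgroup $Z(G)\cap H$ has finite index in $H$, hence is finitely generated; being in addition periodic and abelian, it is finite; and a group containing a finite subgroup of finite index is itself finite. Therefore every finitely generated subgroup of $G$ is finite, i.e.\ $G$ is locally finite.

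The only genuinely non-elementary ingredient — and the step I expect to be the crux — is B.~H.~Neumann's theorem invoked in part~(a); the rest is bookkeeping with three standard facts (a finite-index subgroup of a finitely generated group is finitely generated, a finitely generated periodic abelian group is finite, and a group with a finite subgroup of finite index is finite). One small point to handle carefully is the degenerate case of the ``primes'' argument, where the identity $0\in\mathbb{Z}$ is simply omitted from the independent set.
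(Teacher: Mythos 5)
The paper itself gives no proof of this statement: it is imported verbatim from Aalipour \emph{et al.}\ \cite{ali} (cited as ``Theorems 1 and 2'' of that paper), so the comparison can only be with the cited source, not with an internal argument. Your proposal is a correct and complete proof of the result. The pivot --- adjacent vertices of $P(G)$ commute, so any set of pairwise non-commuting elements is independent, and then B.~H.~Neumann's theorem on Erd\H{o}s's problem yields $[G:Z(G)]<\infty$ --- is the natural (and essentially unavoidable) route to part (a). For part (b), each step checks out: if $o(x)=\infty$ then $\{x^p: p \mbox{ prime}\}$ is an infinite independent set (the elements are distinct because $x$ has infinite order, and $x^p$ is a power of $x^q$ only if $q\mid p$), so $G$ is periodic; for a finitely generated $H\le G$ one has $[H:H\cap Z(G)]\le[G:Z(G)]<\infty$, so $H\cap Z(G)$ is finitely generated by Schreier's lemma, and being also abelian and periodic it is finite, whence $H$ is finite and $G$ is locally finite. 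Your closing worry about the identity is vacuous: $0$ is not a prime, so it never enters the independent set (and in a torsion-free group the identity is isolated in $P(G)$ in any case). The only non-elementary ingredient is Neumann's theorem, exactly as you flag; given that part (a) is in substance a power-graph reformulation of the Erd\H{o}s--Neumann problem, that reliance is to be expected rather than a defect.
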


\begin{theorem}\label{ali2}  Let $G$ be an abelian group satisfying
 $\alpha(P(G)) <\infty$. Then either $G$ is finite
or $G \cong C_{p^{\infty}} \times H$, where $H$ is a finite group and $p\nmid |H|$.
\end{theorem}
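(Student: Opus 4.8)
The plan is to combine Theorem~\ref{ali1} with the primary decomposition of a periodic abelian group, organised around one elementary observation: for distinct non-identity $x,y\in G$, the pair $\{x,y\}$ is \emph{non}-adjacent in $P(G)$ precisely when the cyclic subgroups $\langle x\rangle,\langle y\rangle$ are incomparable under inclusion. Consequently, if the poset of cyclic subgroups of $G$ contains an infinite antichain then choosing a generator of each member gives an infinite independent set, contradicting $\alpha(P(G))<\infty$; in particular two elements of coprime orders greater than $1$ are never adjacent, and for any subgroup $A\le G$ we have $\alpha(P(A))\le\alpha(P(G))$ since $P(A)$ is an induced subgraph of $P(G)$. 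We may assume $G$ is infinite. As $G$ is abelian, Theorem~\ref{ali1} says $G$ is locally finite, hence periodic, so $G=\bigoplus_p G_p$ is the direct sum of its primary components $G_p$.

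If infinitely many $G_p$ were non-trivial, an element of order $p$ chosen from each such component would form an infinite independent set (pairwise coprime orders). So only finitely many $G_p$ are non-trivial, and since $G$ is infinite at least one of them is infinite.

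The main step is to show that an infinite primary component $G_p$ must be isomorphic to $C_{p^\infty}$. First, its socle $G_p[p]=\{g\in G_p:g^p=1\}$ is finite: otherwise it is an infinite elementary abelian $p$-group, and its subgroups of order $p$ form an infinite antichain of cyclic subgroups. A $p$-group with finite socle embeds in its divisible hull, a finite direct power of $C_{p^\infty}$, which satisfies the minimal condition on subgroups; hence $G_p$ is an Artinian abelian $p$-group, and by the structure theorem for such groups it is a direct sum of finitely many cocyclic groups (finite cyclic or quasicyclic). Being infinite it has at least one $C_{p^\infty}$ summand, and if it had any further summand then — as every non-trivial cocyclic group has a subgroup of order $p$ — it would contain a copy of $C_{p^\infty}\oplus C_p$. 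But $C_{p^\infty}\oplus C_p$ has infinite independence number: taking $t$ of order $p$ in the second factor and, in the first factor, elements $a_n$ of order $p^n$ with $\langle a_1\rangle<\langle a_2\rangle<\cdots$, one checks that the elements $a_nt$ have orders $p^n$ and that for $m<n$ neither $a_mt\in\langle a_nt\rangle$ nor $a_nt\in\langle a_mt\rangle$, so $\{a_nt:n\ge1\}$ is an infinite independent set. This contradiction forces $G_p\cong C_{p^\infty}$.

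Finally, suppose two components $G_p,G_q$ with $p\ne q$ were both infinite; by the previous step $C_{p^\infty}\oplus C_{q^\infty}\le G$. For each $N$, pick $u_i\in C_{p^\infty}$ of order $p^i$ and $v_i\in C_{q^\infty}$ of order $q^{N-i}$ for $i=1,\dots,N-1$; then $\langle u_iv_i\rangle=\langle u_i\rangle\oplus\langle v_i\rangle$ and these subgroups are pairwise incomparable, so $\{u_iv_i:1\le i\le N-1\}$ is an independent set of size $N-1$, whence $\alpha(P(G))=\infty$, a contradiction. Therefore exactly one component, say $G_p$, is infinite, it is isomorphic to $C_{p^\infty}$, and $H:=\bigoplus_{q\ne p}G_q$ is finite with $p\nmid|H|$, so $G\cong C_{p^\infty}\times H$. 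I expect the analysis of a single infinite primary component to be the delicate point, since it uses the structure theory of Artinian (finite-socle) abelian $p$-groups together with the explicit independent set in $C_{p^\infty}\oplus C_p$; the remaining steps are bookkeeping with the primary decomposition.
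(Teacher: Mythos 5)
This statement is one the paper does not prove at all: it is imported verbatim from Aalipour \emph{et al.}\ (cited as Theorems 1 and 2 of \cite{ali}), so there is no in-paper argument to compare against. Your proposal is a correct self-contained proof. The chain of steps checks out: local finiteness (from Theorem~\ref{ali1}; in fact periodicity also follows directly, since an element $x$ of infinite order gives the infinite independent set $\{x^q: q \mbox{ prime}\}$), the primary decomposition, finiteness of the number of nontrivial components via coprime-order elements, finiteness of the socle of an infinite component, the passage through the divisible hull to the minimal condition and the cocyclic decomposition, the explicit infinite independent set $\{a_nt\}$ in $C_{p^\infty}\oplus C_p$ (the congruence argument $k\equiv 1\pmod p$ versus $k\equiv 0\pmod p$ is exactly right), and the exclusion of two infinite components. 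One small point deserves a remark: in the last step you only produce independent sets of every finite size, not an infinite one --- and indeed $C_{p^\infty}\oplus C_{q^\infty}$ has \emph{no} infinite independent set, since its cyclic subgroups form a poset isomorphic to $\mathbb{N}\times\mathbb{N}$, which has no infinite antichain. So your contradiction rests on reading $\alpha(P(G))<\infty$ as ``the sizes of independent sets are bounded,'' which is the interpretation the cited theorem requires anyway (otherwise $C_{p^\infty}\times C_{q^\infty}$ would falsify it); it would be worth one sentence making that reading explicit. With that caveat noted, the argument is complete, and it is pleasantly more elementary in spirit than relying on the machinery in \cite{ali}, needing only the structure theory of abelian groups with the minimal condition plus two hand-built independent sets.
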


Now we settle an open problem of Aalipour \emph{et al.}
\cite[Question 38]{ali}.

\begin{theorem} Let $G$ be a group satisfying $\alpha(P(G)) <\infty$. Then either $G$
 is finite,
or $G \cong C_{p^{\infty}} \times H$, where $H$ is a finite group and $p\nmid |H|$.
\label{t:fin_ind}
\end{theorem}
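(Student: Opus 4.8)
The plan is to reduce to the abelian case handled by Theorem~\ref{ali2} --- applied to the centre of $G$ --- and then to reconstruct $G$ from its centre by a Schur--Zassenhaus argument carried out inside finite subgroups. So I would first assume $G$ is infinite and apply Theorem~\ref{ali1}, giving $[G:Z(G)]<\infty$ and $G$ locally finite, hence $Z(G)$ infinite. Since $Z(G)$ induces a subgraph of $P(G)$, $\alpha(P(Z(G)))<\infty$, and, $Z(G)$ being abelian, Theorem~\ref{ali2} gives $Z(G)\cong C_{p^\infty}\times K$ with $K$ finite and $p\nmid|K|$. Let $P$ be the quasicyclic direct factor of $Z(G)$; it is central, hence normal, in $G$, and $n:=[G:P]=[G:Z(G)]\,[Z(G):P]$ is finite, so $G/P$ is a finite group. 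It remains to prove that $p\nmid n$ and that $G$ splits over $P$ (the splitting is automatically a direct product, as $P$ is central).

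The crux is showing $p\nmid n$. If $p\mid n$, then $G/P$ has an element of order $p$ by Cauchy's theorem; lifting it and using that $P$ is $p$-divisible produces $x\in G\setminus P$ with $x^p=1$, so $o(x)=p$ and $\langle x\rangle\cap P=1$, and since $x$ is central $P\langle x\rangle\cong C_{p^\infty}\times C_p$. Fixing elements $t_i\in P$ with $o(t_i)=p^i$ for each $i\ge 1$, I would show that $\{\,t_ix:i\ge 1\,\}$ is an infinite independent set in $P(G)$: for $i<j$, the element $t_jx$ is not a power of $t_ix$ because the $P$-coordinate of any power of $t_ix$ has order dividing $p^i$; and $t_ix$ is not a power of $t_jx$ because $(t_jx)^k=t_ix$ forces $k\equiv 1\pmod p$ (from the $\langle x\rangle$-coordinate) and simultaneously $p\mid k$ (from $t_j^{\,k}=t_i$, which fixes the $p$-adic valuation of $k$ at $j-i\ge 1$). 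This contradicts $\alpha(P(G))<\infty$, so $p\nmid n$.

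With $p\nmid n$ in hand, every $p$-element of $G$ maps into the $p'$-group $G/P$ to an element of $p$-power order, hence to the identity, so it lies in $P$. If $g_1,g_2\in G$ have order coprime to $p$, then $L:=\langle g_1,g_2\rangle$ is finite (local finiteness), $L\cap P$ is the unique, normal, central Sylow $p$-subgroup of $L$, and Schur--Zassenhaus gives $L=(L\cap P)\times H_L$, so that $g_1,g_2\in H_L$ and $g_1g_2$ again has order coprime to $p$. Hence the set $H$ of $p'$-elements of $G$ is a subgroup with $H\cap P=1$; and, $G$ being periodic, each $g\in G$ is the product of its commuting $p$-part (lying in $P$) and $p'$-part (lying in $H$), so $G=PH$. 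As $P$ is central, $G\cong C_{p^\infty}\times H$ with $H$ finite of order $n$ coprime to $p$, which finishes the proof. I expect the independent-set construction in the second paragraph to be the only real obstacle --- in particular verifying that \emph{neither} of $t_ix$, $t_jx$ is a power of the other --- the remaining steps being an assembly of Theorems~\ref{ali1}--\ref{ali2} with classical finite group theory.
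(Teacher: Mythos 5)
Your proposal is correct and follows essentially the same route as the paper: Theorem~\ref{ali1} gives $[G:Z(G)]<\infty$ and local finiteness, Theorem~\ref{ali2} applied to the centre yields the quasicyclic factor $P$, the possibility $p\mid [G:P]$ is refuted by producing a subgroup isomorphic to $C_{p^\infty}\times C_p$ (the paper simply quotes Theorem~\ref{ali2} again, where you exhibit the infinite independent set $\{t_ix\}$ explicitly), and the $p'$-complement is then split off by classical finite group theory (Burnside's transfer theorem in the paper, Schur--Zassenhaus inside finitely generated subgroups in your version). One cosmetic point: your lifted element $x$ of order $p$ need not be central in $G$; what you actually need, and do have, is that $x$ commutes with $P\le Z(G)$, which already makes $P\langle x\rangle$ abelian and isomorphic to $C_{p^\infty}\times C_p$.
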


\begin{proof}
First suppose that $G \cong C_{p^{\infty}} \times  H$, where $H$ is a finite group and
 $p\nmid |H|$. Assume that
$S$ be a infinite independent set of $P(G)$. Since $H$ is finite, $S$ has an infinite subset
 $S_1=\{(a, h)\mid a \in K $ for some $h\in H$ and $K \subseteq C_{p^{\infty}} $. 
 Let $(a,h), (b,h) \in S_1$ and $\langle a\rangle \subseteq \langle b \rangle$. Since $\langle (a,h) \rangle = \langle a\rangle \times \langle h\rangle$ then
  $\langle (a,h) \rangle  \subseteq  \langle (b,h) \rangle$.  
Thus  all vertices in $S_1$ are adjacent, a contradiction.

Conversely, suppose that $\alpha(P(G)) < \infty$ and $G$ is infinite. Then by Theorem \ref{ali1},
 $[G : Z(G)] <\infty$ and
so $G = Z(G)H_1$, where $H$ is a finitely generated subgroup of $G$. Theorem \ref{ali1} implies
that $H_1$ is finite. By Theorem \ref{ali2}, $Z(G) = AB$, where $A \cong C_{p^{\infty}}$ and $B$
 is a finite group
such that $p \nmid |B|$. Consequently, $G=ABH_1=AH_2$, where $H_2=BH_1$. Since
$B$ is a central subgroup of $G$, then $H_2$ is a subgroup of $G$. 

Note that $A$ is a normal subgroup of finite index in $G$ . If $p\mid|G/A|$, 
then $G$ has an element $a$ of order $p$ outside $A$, and $A\langle a \rangle$
is an abelian $p$-group. Then  $A\langle a \rangle \cong C_{p^{\infty}} \times C_p$, which contradicts Theorem~\ref{ali2}.

Finally, the Sylow $p$-subgroup of $H_1$ is central, so by Burnside's transfer
theorem, $H_1$ has a normal $p$-complement, which is the required $H$. \qed
\end{proof}

As a corollary of this result, we show the following:

\begin{corollary}
Let $G$ be a group whose power graph $P(G)$ has finite independence number.
Then the independence number and clique cover number of $P(G)$ are equal.
\end{corollary}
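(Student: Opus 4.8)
The plan is to use the fact that, for \emph{every} group $G$, the power graph $P(G)$ is a comparability graph, together with Dilworth's theorem. First I would put on $G$ the preorder $x\preceq y$ defined by $\langle x\rangle\subseteq\langle y\rangle$ (equivalently, $x$ is a power of $y$): it is reflexive and transitive, and two distinct elements are adjacent in $P(G)$ exactly when they are $\preceq$-comparable. Since $\preceq$ need not be antisymmetric — all generators of a given cyclic subgroup are $\preceq$-equivalent — I would refine it to a partial order $\prec$ by fixing, for each cyclic subgroup $C$, an arbitrary linear order on its set of generators $\gamma(C)=\{g\in G:\langle g\rangle=C\}$, and declaring $x\prec y$ iff either $\langle x\rangle\subsetneq\langle y\rangle$, or $\langle x\rangle=\langle y\rangle$ and $x$ precedes $y$ in the chosen order on that class. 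A routine check shows that $\prec$ is a strict partial order whose comparability graph is precisely $P(G)$.

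Next I would note that antichains of $(G,\prec)$ are exactly the independent sets of $P(G)$, so the width of this poset equals $\alpha(P(G))$, which is finite by hypothesis. Dilworth's theorem, in the form valid for posets of finite width, then yields a partition of $G$ into $\alpha(P(G))$ chains; each chain consists of pairwise $\prec$-comparable, hence pairwise adjacent, elements, so it is a clique of $P(G)$. Consequently the clique cover number of $P(G)$ is at most $\alpha(P(G))$, and since the reverse inequality holds for every graph (as recorded in the introduction), the two numbers coincide.

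The one place that calls for care is the use of Dilworth's theorem for an infinite poset, and I expect this to be the only real obstacle to a clean write-up. It is harmless here: by Theorem~\ref{t:fin_ind} the group $G$ is either finite or isomorphic to $C_{p^{\infty}}\times H$ with $H$ finite, so $(G,\prec)$ is at most countable, where the finite-width form of Dilworth's theorem is standard (and for finite $G$ it is the classical statement). Should one wish to avoid even this, the infinite case admits a direct treatment: partition $C_{p^{\infty}}\times H$ into the sets $C_{p^{\infty}}\times\gamma(C)$ indexed by the cyclic subgroups $C$ of $H$; since any two subgroups of $C_{p^{\infty}}$ are comparable and $\langle(x,h)\rangle=\langle x\rangle\times\langle h\rangle$ when $p\nmid|H|$, each such set is a clique, and a short argument (choosing elements of suitably increasing $p$-power order, as in the proof of Theorem~\ref{t:fin_ind}) produces an independent set meeting every one of them, proving the partition optimal. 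I anticipate that the bulk of the actual work is in these verifications — that $\prec$ is a partial order with comparability graph $P(G)$, and that the chains (or the explicit sets) really are cliques — each of which is elementary.
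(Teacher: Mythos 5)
Your argument is correct, but it follows a genuinely different route from the paper's. You observe that $P(G)$ is the comparability graph of a partial order refining the preorder $\langle x\rangle\subseteq\langle y\rangle$ (which does coincide with adjacency here, since groups with $\alpha(P(G))<\infty$ are locally finite, so every containment comes from a positive power), hence independent sets are antichains and the finite-width form of Dilworth's theorem turns the hypothesis $\alpha(P(G))<\infty$ into a partition into $\alpha(P(G))$ chains, i.e.\ cliques; this is clean and in fact more general, since it never really uses the classification of Theorem~\ref{t:fin_ind} beyond (optionally) guaranteeing countability, and your flagged point of care — Dilworth for infinite posets of finite width — is indeed a standard compactness/K\"onig-type fact. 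The paper instead leans fully on Theorem~\ref{t:fin_ind}: it writes $G=C_{p^\infty}\times H$ with $p\nmid|H|$, shows via the Chinese remainder theorem that powers split coordinatewise, covers $G$ by the cliques $C_{p^\infty}\times E$ where $E$ ranges over the classes of mutual powers in $H$ (your sets $\gamma(C)$), and then exhibits an independent set of the same size by taking a linear extension $E_1<\cdots<E_r$ of the induced order on classes and choosing representatives paired with elements of $C_{p^\infty}$ of decreasing order. What the paper's explicit construction buys is exactly what its Remark advertises — avoidance of any compactness-style ingredient (your Dilworth step plays the role that Lov\'asz's theorem plus compactness would play in the alternative the Remark describes) — together with the exact value $\alpha(P(C_{p^\infty}\times H))=$ the number of cyclic subgroups of $H$, which your poset argument does not directly produce. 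One small caution on your fallback "direct treatment": to get an independent set meeting every clique $C_{p^\infty}\times\gamma(C)$ you must choose the $p$-power orders along a linear extension of containment among the cyclic subgroups of $H$ (as the paper does); an arbitrary assignment of "increasing orders" need not work, so if you keep that alternative, spell out the linear extension.
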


\begin{remark}
This theorem can be deduced using \cite[Theorem 12]{ali}, asserting that the
power graph of a group of finite exponent is perfect, together with the
Weak Perfect Graph Theorem of Lov\'asz~\cite{lov}, asserting that the
complement of a finite perfect graph is perfect; this argument would also
require a compactness argument to show that the clique cover number of $P(G)$
is equal to the maximum clique cover number of its finite subgroups. However,
given our Theorem~\ref{t:fin_ind}, the argument below is substantially more
elementary.
\end{remark}

\begin{proof}
We know that $G=C_{p^\infty}\times H$, where $H$ is a finite group and $p$ a
prime not dividing $|H|$. We examine the structure of the power graph of such
a group. Let $\vec{P}(H)$ be the directed power graph of $H$; write
$h\leftrightarrow h'$ if $h\to h'$ and $h'\to h$ in $\vec{P}(H)$. Let $Z_i$
be the set of elements of order $p^i$ in $C_{p^\infty}$.

Note that $(z',h')$ is a power of $(z,h)$ if and only if $z'$ is a power of $z$
and $h'$ a power of $h$. One way round is trivial. In the other direction, 
suppose that $z'=z^a$ and $h'=h^b$. By the Chinese remainder theorem, choose
$c$ such that
\[c\equiv a\mbox{ mod }o(z),\quad c\equiv b\mbox{ mod }\exp(H).\]
Then $(z,h)^c=(z^a,h^b)=(z',h')$.

It follows that two elements $(z,h)$ and $(z',h')$ are adjacent in $P(G)$ if
and only if one of the following happens:
\begin{enumerate}\itemsep0pt
\item $h\leftrightarrow h'$ in $\vec{P}(G)$;
\item $h\to h'$ and $h'\not\to h$ in $\vec{P}(G)$, and $z\in Z_i$, $z'\in Z_j$
with $i\ge j$;
\item $h\not\to h'$ and $h'\to h$ in $\vec{P}(G)$, and $z\in Z_i$, $z'\in Z_j$
with $i\le j$.
\end{enumerate}
Now the relation $\leftrightarrow$ is an equivalence relation on $H$. If $E$
is an equivalence class, then $C_{p^\infty}\times E$ is a clique in $P(G)$,
so we have a clique cover of size equal to the number of $\leftrightarrow$
classes. We have to find an independent set of the same size.

To do this, we note that the $\leftrightarrow$ classes are partially ordered
by $\to$. Extend this partial order to a total order $<$, and number the
classes $E_1,\ldots,E_r$ with $E_1<E_2<\cdots<E_r$. Now take $h_i\in E_i$
and $z_j\in Z_j$; the set
\[\{(h_i,z_{r-i}):1\le i\le r\}\]
is an independent set, since if $i<j$ then $h_j\not\to h_i$ and $r-i>r-j$. \qed
\end{proof}

\begin{remark}
This argument also gives us a formula for the independence number of
$P(C_{p^\infty}\times H)$, where $p\nmid|H|$. Since $x\leftrightarrow y$ if
and only if $\langle x\rangle=\langle y\rangle$, we see that
$\alpha(P(C_{p^\infty}\times H))$ is equal to the number of cyclic subgroups
of $H$.
\end{remark}

\begin{corollary}
Let $G$ be a group for which $P(G)$ has finite independence number. Then
$P(G)$ is a perfect graph.
\end{corollary}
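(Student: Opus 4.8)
The plan is to invoke the dichotomy of Theorem~\ref{t:fin_ind} and reduce both cases to finite groups, where \cite[Theorem 12]{ali} --- asserting that the power graph of a group of finite exponent is perfect --- applies directly. If $G$ is finite it has finite exponent, so $P(G)$ is perfect and there is nothing to do. So I would assume from now on that $G\cong C_{p^\infty}\times H$ with $H$ finite and $p\nmid|H|$.

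Next I would use the fact that, for a (possibly infinite) graph $\Gamma$, perfection is a local condition: $\Gamma$ is perfect if and only if each of its finite induced subgraphs is perfect (equivalently, by the Strong Perfect Graph Theorem, $\Gamma$ has no induced odd hole and no induced odd antihole). It therefore suffices to take an arbitrary finite set $S\subseteq G$ of vertices and show that the subgraph of $P(G)$ induced on $S$ is perfect. The images of the elements of $S$ under the projection $G\to C_{p^\infty}$ form a finite subset of $C_{p^\infty}$, hence generate a finite subgroup $C_{p^k}$; thus $S$ lies in the finite subgroup $K=C_{p^k}\times H$ of $G$. Since adjacency of two elements $x,y$ in a power graph holds exactly when $\langle x\rangle\subseteq\langle y\rangle$ or $\langle y\rangle\subseteq\langle x\rangle$, and the cyclic subgroups $\langle x\rangle$, $\langle y\rangle$ are unchanged on passing from $G$ to the subgroup $K$, the subgraph of $P(G)$ induced on $S$ equals the subgraph of $P(K)$ induced on $S$. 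As $K$ is finite it has finite exponent, so $P(K)$ is perfect by \cite[Theorem 12]{ali}, and hence so is its induced subgraph on $S$; since $S$ was arbitrary, $P(G)$ is perfect.

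The step I expect to require the most care is this reduction to a finite subgroup: one must check that an induced subgraph of $P(G)$ on a finite vertex set really is the induced subgraph of the power graph of some finite subgroup, which comes down to the elementary facts that $\langle x\rangle$ is computed the same way inside any subgroup containing $x$ and that a finite subset of $C_{p^\infty}$ lies in a finite subgroup. I would also fix at the outset the convention that an infinite graph counts as perfect when all of its finite induced subgraphs are perfect; should one want the stronger conclusion that $\chi(\Delta)=\omega(\Delta)$ for every induced subgraph $\Delta$, finite or infinite, I would add that $P(G)$ is countable and combine the finite-subgroup reduction with the de~Bruijn--Erd\H{o}s theorem.
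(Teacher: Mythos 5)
Your proposal is correct and follows essentially the same route as the paper: reduce to the case $G\cong C_{p^\infty}\times H$ via Theorem~\ref{t:fin_ind}, note that every finite induced subgraph lies in a finite subgroup $C_{p^k}\times H$, and apply \cite[Theorem 12]{ali} together with the convention that an infinite graph is perfect when all its finite induced subgraphs are. The extra care you take (adjacency being unchanged in a subgroup, the de~Bruijn--Erd\H{o}s remark) is fine but not a different argument.
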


\begin{proof}
This is true if $G$ is finite, by \cite[Theorem 12]{ali}, so suppose that
$G=C_{p^\infty}\times H$, where $P\nmid|H|$. Any finite induced subgraph of
$G$ is contained in $C_{p^n}\times H$, for some $n$, and so is perfect. This
gives the result, since an infinite graph is defined to be perfect if all its
finite induced subgraphs are perfect. \qed
\end{proof}

\section{Connectivity}

In a torsion-free group $G$, the identity is an isolated vertex of $P(G)$,
while in a torsion group, it is joined to every vertex. So for questions of
connectivity, we use the proper power graph $P^*(G)$ instead.

However, a group may have elements other than the identity which are joined
to all vertices in the power graph. Our first result explains when this can
happen in a finite group.

Then we provide some results on the connectivity of proper power graphs, and
extend the just-mentioned result to infinite groups. 

\subsection{Vertices joined to every vertex}

In this subsection we consider finite groups only. We classify those
groups in which some non-identity vertex is joined to all others, and decide
whether the graphs remain connected when all such vertices are deleted.

Note that this is similar to the usual convention in studying the
\emph{commuting graph} of a group, the graph in which group elements $x$ and $y$
are joined if and only if $xy=yx$. In this case, the set of vertices joined to
all others is precisely the \emph{center} of the group, and in studying the
connectedness of the commuting graph it is customary to delete the center:
see for example \cite{gp,mp}. (We remark that the power graph of $G$ is a
spanning subgraph of its commuting graph.)

The \emph{generalized quaternion} group $Q_{2^n}$ is defined by
\[Q_{2^n}=\langle a, b \mid a^{2^{n-1}}=b^2, bab^{-1}=a^{-1}, b^4=1 \rangle.\]

\begin{theorem}
Let $G$ be a finite group. Suppose that $x\in G$ has the property that for all
$y\in G$, either $x$ is a power of $y$ or \emph{vice versa}. Then one of the
following holds:
\begin{enumerate}\itemsep0pt
\item $x=1$;
\item $G$ is cyclic and $x$ is a generator;
\item $G$ is a cyclic $p$-group for some prime $p$ and $x$ is arbitrary;
\item $G$ is a generalized quaternion group and $x$ has order~$2$.
\end{enumerate}
\label{t:joined_all}
\end{theorem}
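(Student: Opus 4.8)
The plan: if $x=1$ we are in case~(a), so suppose $x\ne1$ and set $m=o(x)$. The hypothesis on $x$ is equivalent to
\[(\star)\qquad\mbox{for every }y\in G,\ \mbox{either }\langle y\rangle\subseteq\langle x\rangle\mbox{ or }\langle x\rangle\subseteq\langle y\rangle.\]
The first step is a basic consequence of $(\star)$: \emph{every subgroup of $G$ of prime order is contained in $\langle x\rangle$}. Indeed, if $P$ has prime order $q$, then by $(\star)$ either $P\subseteq\langle x\rangle$, or $\langle x\rangle\subseteq P$, and the latter forces $m\mid q$, hence $m=q$ and $P=\langle x\rangle$; either way $P\subseteq\langle x\rangle$. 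Two corollaries follow at once: by Cauchy's theorem the primes dividing $|G|$ are exactly those dividing $m$; and, since the cyclic group $\langle x\rangle$ has at most one subgroup of each order, $G$ has a unique subgroup of order $q$ for every prime $q$ dividing $|G|$.

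Next I would split into two cases according to whether $\langle x\rangle$ is maximal among the cyclic subgroups of $G$. If it is, then $(\star)$ puts every $y$ inside $\langle x\rangle$ (if $\langle x\rangle\subseteq\langle y\rangle$ then $\langle y\rangle=\langle x\rangle$ by maximality), so $G=\langle x\rangle$ is cyclic and $x$ is a generator: case~(b). Otherwise $\langle x\rangle\subsetneq C$ for some cyclic subgroup $C$, and the claim is that $m$ is a prime power. Pick a prime $q$ dividing $|C|/m$, let $D$ be the subgroup of $C$ of order $mq$ --- it is cyclic and contains $\langle x\rangle$ --- and let $y$ generate the Sylow $q$-subgroup of $D$, so $o(y)=q^{a+1}$, where $q^a$ is the exact power of $q$ dividing $m$. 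Apply $(\star)$ to $y$: the option $\langle y\rangle\subseteq\langle x\rangle$ would give $q^{a+1}\mid m$, which is false, so $\langle x\rangle\subseteq\langle y\rangle$; hence $m\mid q^{a+1}$, and therefore $m=q^a$, a power of $q$. Write $p=q$, so $m=p^a$.

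By the first step, $G$ now has $p$ as its only prime divisor, so $G$ is a $p$-group, and $G$ has a unique subgroup of order $p$. By the classical classification of such groups, $G$ is either cyclic --- case~(c) --- or $p=2$ and $G$ is a generalized quaternion group $Q_{2^n}$. In the remaining case I would show $o(x)=2$ by contradiction: if $o(x)\ge4$, choose two distinct subgroups $H_1\ne H_2$ of $Q_{2^n}$ of order $4$ (for example $\langle a^{2^{n-3}}\rangle$ inside the cyclic maximal subgroup $\langle a\rangle$, and $\langle b\rangle$ for some $b\notin\langle a\rangle$). Applying $(\star)$ to generators of $H_1$ and $H_2$ and using that $|\langle x\rangle|$ is a power of $2$ at least $4$, one finds that both $H_1$ and $H_2$ must lie inside $\langle x\rangle$ (the alternative $\langle x\rangle\subseteq H_i$ would make $\langle x\rangle=H_i$, which is inconsistent with the condition coming from the other $H_j$); but the cyclic group $\langle x\rangle$ has at most one subgroup of order $4$, a contradiction. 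Hence $o(x)=2$: case~(d).

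The step I expect to be the main obstacle is showing, in the second case, that $o(x)$ must be a prime power: one has to realise that $\langle x\rangle$ being comparable under inclusion with \emph{every} cyclic subgroup is already rigid enough to kill all but one prime, and to find the right test element --- the generator of the Sylow $q$-subgroup of a cyclic overgroup of $\langle x\rangle$ carrying one extra factor of $q$. After that the argument is essentially bookkeeping, together with the classical fact (which should be cited) that a finite $p$-group with a unique subgroup of order $p$ is cyclic, or generalized quaternion when $p=2$; one should also take a little care that the order-$4$ argument in the quaternion case covers $Q_8$ as well as the larger groups $Q_{2^n}$.
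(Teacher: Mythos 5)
Your proof is correct, but it takes a genuinely different route from the paper's. The paper argues by induction on the group, using the fact that the hypothesis on $x$ is inherited by every subgroup containing $x$, together with the observation that $x$ is central (each $y$ commutes with $x$ because one is a power of the other); it then splits on whether $o(x)$ is a prime power, invoking Burnside's theorem (a finite $p$-group with a unique subgroup of order $p$ is cyclic or generalized quaternion) in the prime-power case, and in the non-prime-power case using the inductive hypothesis to show $\langle x\rangle$ is a maximal subgroup and rule out everything but $G$ cyclic with $x$ a generator; in the quaternion case it gets $o(x)=2$ essentially from centrality, since $Z(Q_{2^n})$ has order $2$. You avoid both the induction and the centrality argument: your key lemma that every prime-order subgroup lies in the cyclic group $\langle x\rangle$ (so the primes of $|G|$ are those of $o(x)$ and $G$ has a unique subgroup of each prime order), your case split on maximality of $\langle x\rangle$ among cyclic subgroups, and your test element (a generator of the Sylow $q$-subgroup of a cyclic subgroup of order $o(x)q$ containing $\langle x\rangle$) forcing $o(x)$ to be a prime power are all direct comparability arguments in the subgroup lattice. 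Both proofs ultimately rest on Burnside's classification, but yours is self-contained (no induction) and supplies an explicit justification, by counting subgroups of order $4$ in $Q_{2^n}$ including $Q_8$, of the claim that $x$ has order $2$ in the quaternion case, which the paper asserts only briefly; the paper's route buys brevity, since centrality plus induction dispatches the non-prime-power case in a few lines. One small point worth making explicit in your prime-power step: by your own corollary $q\mid o(x)$, so $a\ge1$ and $m=q^a$ is a nontrivial power of $q$ (as written, $a=0$ is excluded only because $x\ne1$).
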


\begin{proof}
We observe first that the converse is true; each of the four cases listed
implies that $x$ satisfies the hypothesis.

Note that the condition is inductive; that is, if $H$ is a subgroup of $G$
with $x\in H$, then $x$ satisfies the same hypothesis in $H$. If no such
subgroup apart from $G$ exists, then $G$ is cyclic and $x$ is a generator.
So we may inductively suppose that the theorem is true for any group smaller
than $G$. We may clearly assume that $x\ne1$.

We observe that, since an element and any power commute, $x$ belongs to the
center $Z(G)$ of $G$. Moreover, if $G$ is abelian, then it is cyclic. For if
not, then for some prime $p$, $G$ contains elements of order $p$ neither of
which is a power of the other; then they cannot be both powers of $x$.

Suppose first that the order of $x$ is a power of a prime $p$. Let $z$ be a
power of $x$ which has order $p$. Then clearly $\langle z\rangle$ is the only
subgroup of order $p$ in $G$. If $G$ is not a $p$-group, it contains an
element $u$ of prime order $q\ne p$; then $zu$ has order $pq$, and cannot be
a power of $x$. If $G$ is a $p$-group, then a theorem of Burnside
\cite[Theorem 12.5.2]{hall} shows that $G$ is either cyclic or generalized
quaternion; in the latter case, $x$ has order $2$.

So we may suppose that the order of $x$ is not a prime power. If $x\in H$ and
$H<G$, then $H$ must be cyclic generated by $x$. So $\langle x\rangle$ is a
maximal subgroup of $G$. In particular, the center of $G$ is generated by $x$,
but $G$ itself is not cyclic. Now elements outside $\langle x\rangle$
are not powers of $x$, and $x$ cannot be a power of such an element (else
$G$ would be cyclic). \qed
\end{proof}

Now we consider the result of deleting all such vertices.
The power graph of a cyclic group of prime power order is complete, so nothing
remains; but these groups present no difficulty.

Suppose that $G$ is cyclic of non-prime-power order. If the order of $G$ is
the product of two primes $p$ and $q$, then removing the identity and the
generators leaves a disconnected graph consisting of complete graphs of sizes
$p-1$ and $q-1$ with no edges between them. But in any other case, the graph
is connected. For any element of $G$ has a power which has prime order, and if
$x$ and $y$ are elements of distinct prime orders $p$ and $q$ then $x$ and $y$
are both joined to $xy$.

Finally, if we remove the identity and the involution from the generalized
quaternion group of order $2^n$, we obtain a complete graph of cardinality
$2^{n-1}-2$ together with $2^{n-2}$ disjoint edges.

In the remaining case, we delete the identity and obtain the proper power
graph $P^*(G)$. So from now on we consider only this case. 

We extend Theorem~\ref{t:joined_all} to infinite groups at the end of the next
subsection.

\subsection{Connectivity of the proper power graph}

Since an element of finite order is not adjacent to any element
of infinite order, we have the following elementary result.

\begin{lemma}\label{con}
If $P^*(G)$ is connected then $G$ is torsion-free or periodic. \qed
\end{lemma}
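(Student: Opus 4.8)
The plan is to prove the contrapositive: assuming $G$ is neither torsion-free nor periodic, I will exhibit a partition of $V(P^*(G))=G\setminus\{1\}$ into two nonempty parts with no edge of $P^*(G)$ running between them, so that $P^*(G)$ is disconnected.

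Concretely, let $T$ be the set of non-identity elements of finite order in $G$, and let $F$ be the set of elements of infinite order. Since $G$ is not torsion-free, $T\neq\emptyset$; since $G$ is not periodic, $F\neq\emptyset$. Every non-identity element of $G$ lies in exactly one of $T$ and $F$, so $\{T,F\}$ partitions the vertex set of $P^*(G)$. The key point — essentially the remark already recorded before the lemma — is that no vertex of $T$ is adjacent to a vertex of $F$. Indeed, if $a$ has finite order then every power of $a$ again has finite order, so it cannot equal an element of infinite order; and if $b$ has infinite order then every power of $b$ again has infinite order (in a group $b^{-1}$ has the same order as $b$), so it cannot equal a non-identity element of finite order. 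Hence for $a\in T$ and $b\in F$ neither is a power of the other, so $a$ and $b$ are nonadjacent. Thus $P^*(G)$ splits into the induced subgraphs on $T$ and on $F$ with no crossing edges, and since both are nonempty it is disconnected. Contrapositively, connectedness of $P^*(G)$ forces $T=\emptyset$ (so $G$ is torsion-free) or $F=\emptyset$ (so $G$ is periodic).

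There is essentially no obstacle here; the only things that need a word of care are the verification that passing to powers — including inverses — preserves the dichotomy between finite and infinite order, and the observation that the two parts $T$ and $F$ are genuinely nonempty precisely when both alternatives of the conclusion fail. Both are immediate, and assembling them yields the lemma.
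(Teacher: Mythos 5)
Your proposal is correct and follows the same idea as the paper, which simply notes that an element of finite order is never adjacent to an element of infinite order and leaves the rest to the reader; your contrapositive partition of $G\setminus\{1\}$ into the finite-order and infinite-order elements just makes that one-line observation explicit. No issues.
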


For  characterizing some torsion-free  groups  with connected proper
 power graphs we need the following famous theorems.
 
\begin{theorem}[Schur's theorem]\label{markaz}
 Let $G$ be a group. If  $[G:Z(G)]$ is finite then $G^{\prime}$ is finite.
  \end{theorem}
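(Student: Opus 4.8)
The plan is the classical argument combining a count of commutators with the transfer homomorphism; write $n=[G:Z(G)]$. \emph{First} I would observe that $G$ has only finitely many commutators. The point is that $[x,y]$ depends only on the cosets $xZ(G)$ and $yZ(G)$: if $z,w\in Z(G)$ then $[xz,yw]=[x,y]$, the central factors being pulled through the word and cancelled against their inverses. Hence the set $C$ of all commutators of $G$ has $|C|\le n^{2}$, and it is closed under inversion since $[x,y]^{-1}=[y,x]$; in particular $G^{\prime}=\langle C\rangle$ is finitely generated.

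\emph{Next}, I would reduce to the centre. By the second isomorphism theorem $G^{\prime}/(G^{\prime}\cap Z(G))\cong G^{\prime}Z(G)/Z(G)\le G/Z(G)$, so $N:=G^{\prime}\cap Z(G)$ has index at most $n$ in $G^{\prime}$. Being a finite-index subgroup of a finitely generated group, $N$ is finitely generated (Schreier), and it is abelian because $N\le Z(G)$. Since $|G^{\prime}|=|N|\cdot[G^{\prime}:N]$, it suffices to prove that $N$ is finite, equivalently --- as $N$ is a finitely generated abelian group --- that $N$ is torsion.

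\emph{Then} I would invoke the transfer homomorphism $V\colon G\to Z(G)$, which exists because $Z(G)$ is abelian. Two standard facts about $V$ close the argument: its image is abelian, so $V(G^{\prime})=1$ and hence $N\subseteq\ker V$; and the restriction of $V$ to $Z(G)$ is the $n$th power map $z\mapsto z^{n}$. Therefore every $g\in N$ satisfies $g^{n}=V(g)=1$, so $N$ has finite exponent; a finitely generated abelian group of finite exponent is finite, so $N$, and with it $G^{\prime}$, is finite.

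The only step with genuine content is the last one, and even there the content is entirely in the properties of the transfer: checking that $V$ is a well-defined homomorphism with the stated behaviour on $Z(G)$ is the usual coset bookkeeping for the Verlagerung, which I would quote from a standard reference rather than redo. If one wished to avoid transfer, the alternative is to show directly that every element of $G^{\prime}$ is a product of a number of commutators bounded by a function of $n$ alone, whence $|G^{\prime}|\le (n^{2})^{N}$ for a bound $N=N(n)$; this route is more combinatorial, the awkward part being to bound the order of a single commutator, for which one can exploit that each element of $G$ has at most $[G:Z(G)]$ conjugates.
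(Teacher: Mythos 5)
Your proof is correct: the count of commutators via cosets of the centre, the reduction to $N=G'\cap Z(G)$, and the use of the transfer $V\colon G\to Z(G)$ (trivial on $G'$, the $n$th power map on $Z(G)$) together give the standard argument that $N$ is a finitely generated abelian group of exponent dividing $n$, hence finite, hence $G'$ is finite. Note that the paper itself offers no proof of this statement --- it quotes Schur's theorem as a known classical result and only uses it --- so there is nothing to compare against; your write-up is simply a correct rendering of the classical transfer proof, and the commutator-counting alternative you sketch at the end is the other standard route.
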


\begin{theorem}\label{es}
 The additive group $\mathbb{Q}$ of rational numbers  is a (unique) maximal
essential extension of group $\mathbb{Z}$ of integers as $\mathbb{Z}$-module.
  \end{theorem}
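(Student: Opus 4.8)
The plan is to recognise Theorem~\ref{es} as the classical statement that $\mathbb{Q}$ is the injective hull of $\mathbb{Z}$ in the category of $\mathbb{Z}$-modules, and to prove it by hand rather than by quoting that theory. Three things must be checked: (i) $\mathbb{Z}\subseteq\mathbb{Q}$ is an essential extension; (ii) $\mathbb{Q}$ is maximal among essential extensions of $\mathbb{Z}$; and (iii) any maximal essential extension of $\mathbb{Z}$ is isomorphic to $\mathbb{Q}$ via an isomorphism that restricts to the identity on $\mathbb{Z}$, which is what the parenthetical word ``unique'' should be taken to mean.

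For (i), if $M$ is a nonzero subgroup of $\mathbb{Q}$ and $0\ne a/b\in M$ with $a,b\in\mathbb{Z}$, then $b\cdot(a/b)=a$ is a nonzero element of $M\cap\mathbb{Z}$, so the intersection is nonzero. For (ii), suppose $\mathbb{Q}\subseteq K$ with $K$ an essential extension of $\mathbb{Z}$. First note $K$ is torsion-free: a nonzero element of finite order would generate a finite nonzero submodule whose intersection with $\mathbb{Z}$ is a finite subgroup of $\mathbb{Z}$, hence zero, contradicting essentiality. Now take $x\in K\setminus\{0\}$; then $\mathbb{Z}x$ is nonzero, so $\mathbb{Z}x\cap\mathbb{Z}\ne 0$, i.e.\ $nx=m$ for integers $n\ne0$ and $m$. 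Since $n(x-m/n)=0$ in $K$ and $K$ is torsion-free, $x=m/n\in\mathbb{Q}$; hence $K=\mathbb{Q}$.

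For (iii), let $M$ be any maximal essential extension of $\mathbb{Z}$. As above $M$ is torsion-free, so the natural map $M\to D:=M\otimes_{\mathbb{Z}}\mathbb{Q}$ is injective and $D$ is a $\mathbb{Q}$-vector space; moreover $M\subseteq D$ is essential, because every nonzero element of $D$ has a nonzero integer multiple lying in (the image of) $M$. Essential extensions compose (a nonzero submodule of $D$ meets $M$ in a nonzero submodule, which in turn meets $\mathbb{Z}$), so $D$ is an essential extension of $\mathbb{Z}$ containing $M$, and maximality forces $D=M$. Thus $M$ is torsion-free and divisible, hence a $\mathbb{Q}$-vector space in which the copy of $\mathbb{Z}$ spans a one-dimensional subspace; if $\dim_{\mathbb{Q}}M\ge 2$, a nonzero complement of that subspace would meet $\mathbb{Z}$ trivially, contradicting essentiality. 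Therefore $M$ is one-dimensional over $\mathbb{Q}$ and the evident isomorphism $M\to\mathbb{Q}$ fixes $\mathbb{Z}$. A less elementary alternative to (ii) and (iii) is to invoke that $\mathbb{Q}$, being divisible, is injective as a $\mathbb{Z}$-module, together with the standard facts that injective modules have no proper essential extensions and that injective hulls are unique up to isomorphism over the submodule; but the argument above sidesteps this machinery. The only genuinely delicate point is (iii): one must pin down the meaning of ``unique'', use the transitivity of essential extensions, and carry out the dimension count — none of which is hard, but all of which is easy to state loosely.
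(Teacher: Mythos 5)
Your proof is correct in all three parts: the verification that $\mathbb{Z}\subseteq\mathbb{Q}$ is essential, the argument that an essential extension of $\mathbb{Z}$ is torsion-free so that $nx=m$ forces $x=m/n$ and hence no essential extension properly contains $\mathbb{Q}$, and the uniqueness argument via $M\hookrightarrow M\otimes_{\mathbb{Z}}\mathbb{Q}$, transitivity of essentiality, maximality, and the dimension count. The comparison to make is that the paper offers no proof of Theorem~\ref{es} at all: it is quoted as a ``famous theorem'', in effect the classical statement that $\mathbb{Q}$ is the injective hull (equivalently, the maximal essential extension, unique up to isomorphism over $\mathbb{Z}$) of $\mathbb{Z}$ as a $\mathbb{Z}$-module, which is usually deduced from divisibility/injectivity of $\mathbb{Q}$ together with the general theory of injective hulls. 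Your route replaces that machinery with elementary, self-contained verifications; the price is the extra care you correctly flag in part (iii) --- identifying $M$ with its image in $M\otimes_{\mathbb{Z}}\mathbb{Q}$, composing essential extensions, interpreting ``unique'' as unique up to an isomorphism fixing $\mathbb{Z}$, and invoking the existence of a $\mathbb{Q}$-linear complement --- whereas the paper's citation buys brevity by delegating everything to standard module theory. Both are legitimate; yours has the advantage of making the paper's appeal to essential extensions verifiable without outside references.
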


Now we can show the following:

\begin{theorem}
 Let $G$ be a locally center-by-finite group which is torsion-free. Then $P^*(G)$
is connected if and only if $G$ is isomorphic to a subgroup of $\mathbb{Q}$.
\end{theorem}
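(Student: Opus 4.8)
The plan is to prove the two implications separately: Schur's theorem (Theorem~\ref{markaz}) handles the algebraic reduction in one direction, and the description of $\mathbb{Q}$ as the maximal essential extension of $\mathbb{Z}$ (Theorem~\ref{es}) supplies the final embedding. For the ``if'' direction, suppose $G$ embeds in $\mathbb{Q}$. Every finitely generated subgroup of $\mathbb{Q}$ is infinite cyclic, so $G$ is \emph{locally cyclic}; given two nonidentity elements $a,b$, write $\langle a,b\rangle=\langle c\rangle$. Then $a$ and $b$ are both powers of $c$, so $c$ is joined to (or equal to) each of them, and there is a path of length at most $2$ from $a$ to $b$ in $P^*(G)$. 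Hence $P^*(G)$ is connected.

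For the ``only if'' direction, assume $G$ is locally center-by-finite, torsion-free, with $P^*(G)$ connected. The first step is to show $G$ is abelian. For any finite $X\subseteq G$, the subgroup $\langle X\rangle$ is center-by-finite, so by Schur's theorem $\langle X\rangle'$ is finite; a finite subgroup of a torsion-free group is trivial, so $\langle X\rangle$ is abelian, and as $X$ is arbitrary, $G$ is abelian. In particular every finitely generated subgroup of $G$ is free abelian of finite rank.

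The second and main step is to use connectivity to force $G$ to have torsion-free rank at most $1$: I claim any two nonidentity elements $a,b$ are \emph{commensurable}, meaning $ma=nb$ for some nonzero integers $m,n$. If $x,y$ are adjacent in $P^*(G)$ then one of $\langle x\rangle,\langle y\rangle$ contains the other, whence $x$ and $y$ are commensurable; since commensurability is transitive among nonidentity elements of a torsion-free abelian group, walking along a path from $a$ to $b$ shows $a$ and $b$ are commensurable. Thus $\langle a,b\rangle$ is free abelian of rank $1$, hence cyclic, so $G$ is locally cyclic. To finish, fix a nonidentity element $a$ (the trivial group is itself a subgroup of $\mathbb{Q}$), so $\langle a\rangle\cong\mathbb{Z}$. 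If $0\ne b\in G$ and $\langle a,b\rangle=\langle c\rangle$ with $a=sc$, $b=tc$ (necessarily $s,t\ne 0$), then $ta=sb$ is a nonzero element of $\langle a\rangle\cap\langle b\rangle$, so every nonzero subgroup of $G$ meets $\langle a\rangle$ nontrivially; that is, $G$ is an essential extension of $\langle a\rangle$ as a $\mathbb{Z}$-module. By Theorem~\ref{es}, $\mathbb{Q}$ is the maximal essential extension of $\mathbb{Z}$, into which every essential extension of $\mathbb{Z}$ embeds, so $G$ is isomorphic to a subgroup of $\mathbb{Q}$.

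The main obstacle I anticipate is the middle step of the converse: faithfully turning a path in $P^*(G)$ into a chain of divisibility relations and checking that it forces commensurability (equivalently, rank one), and then verifying that local cyclicity yields essentiality of $G$ over $\langle a\rangle$ so that Theorem~\ref{es} can be applied. By comparison the reduction to the abelian case via Schur's theorem and the ``if'' direction are routine.
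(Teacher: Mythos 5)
Your proposal is correct and follows essentially the same route as the paper: Schur's theorem to reduce to the abelian case, commensurability along paths in $P^*(G)$ to show $G$ is an essential extension of an infinite cyclic subgroup, Theorem~\ref{es} to embed $G$ in $\mathbb{Q}$, and a common multiple giving a length-two path for the converse. Your extra detail on local cyclicity and the rank-one argument just makes explicit what the paper's proof leaves implicit.
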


\begin{proof}
Let $P^*(G)$  be connected, and $x, y \in G$ nontrivial. There is a
path $x=a_1-a_2-\cdots -a_t=y$ in $P^*(G)$. Then
$\langle a_i\rangle \cap \langle a_{i+1}\rangle \neq \{1\}$ for $1\leq i \leq {t-1}$. This implies that $\langle a_i$ and $\langle a_{i+1}$ are
\emph{cmmensurable}, that is, their index has finite intersection in each.
Since commensurability is an equivalence relation, $\langle x\rangle$ and
$\langle y \rangle$ are commensurable, so their intersection is not trivial.

Firstly we prove that $G$ is abelian.
 Suppose to the contrary, we assume that $x, y$ are two elements with
 $xy \neq yx$. Set $H=\langle x, y \rangle$. By the hypothesis,
    $[H:Z(H)]$ is finite.    Now by Theorem \ref{markaz}, $H^{\prime}$ is
 a nontrivial finite group, which contradicts Lemma \ref{con}. We deduce that $G$ is abelian.

Let $a$ be a nontrivial element of $G$ and $H=\langle a \rangle$.
   By assumption $G$ is an essential  extension  of $H$ as $Z$-module. Let $L$ be a maximal
    essential  extension of $G$. By Theorem \ref{es}, $L\cong \mathbb{Q}$, which completes the proof.

For the converse, it is obvious that for any elements $x=m/n, y=m_1/n_1 \in\mathbb{Q}$, $nm_1x=n_1my$, as desired. \qed
    \end{proof}

There are examples of non-abelian torsion-free groups in \cite{ad}, \cite{obz}, in
    which the intersection of any two non-trivial subgroups is non-trivial. Thus their proper power graphs are connected.

For further investigation of the power graphs of torsion-free groups, we refer
to \cite{cgj}.

For   periodic groups, it seems  the following  result is the best  for connectivity.
\begin{theorem}[\cite{jj}, Lemma 2.1] \label{ham}
Let $G$ be a  periodic group. Then $P^*(G)$ is connected if and
only if for any two  elements $x,y$ of prime orders where $\langle
x \rangle \neq \langle y \rangle$, there exist elements
$x=x_0,x_1,\ldots,x_t=y$  such that $o(x_{2i})$ is prime,
$o(x_{2i+1})=o(x_{2i})o(x_{2i+2})$ for $i \in \{0,...,t/2\}$ and,
$x_i$ is adjacent to $x_{i+1}$ for $i \in \{0,1,...,t-1\}$.
\end{theorem}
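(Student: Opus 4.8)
The plan is to prove the two implications separately; the ``if'' direction is essentially immediate, while the ``only if'' direction requires normalizing an arbitrary path into one of the prescribed alternating shape.

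For the converse direction, assume the walk condition holds and let $x,y$ be arbitrary nontrivial elements of $G$. Since $G$ is periodic, $x$ has a power $x'$ of prime order and $y$ has a power $y'$ of prime order, so $x-x'$ and $y-y'$ are edges of $P^*(G)$ (unless these elements already coincide). If $\langle x'\rangle=\langle y'\rangle$ then $x'-y'$ is an edge; otherwise the hypothesis supplies a walk $x'=x_0,x_1,\dots,x_t=y'$ with consecutive terms adjacent. Splicing the three pieces gives a walk from $x$ to $y$ in $P^*(G)$, so $P^*(G)$ is connected. Here one uses only that the $x_i$ are successively adjacent, not the order conditions.

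For the forward direction, suppose $P^*(G)$ is connected and let $x,y$ be of prime order with $\langle x\rangle\ne\langle y\rangle$. I would fix a \emph{shortest} path $x=a_0-a_1-\cdots-a_n=y$ and record two elementary facts: (i) if $a-b$ is an edge of $P^*(G)$ then one of $\langle a\rangle,\langle b\rangle$ contains the other, so there is a finite cyclic group $C_{ab}$ (the larger of the two) containing both; and (ii) minimality forces $\langle a_{i-1}\rangle\ne\langle a_i\rangle$ for every $i$, since whenever $\langle a_i\rangle=\langle a_{i+1}\rangle$ one checks at once that $a_{i-1}$ is adjacent to $a_{i+1}$, yielding a shorter path (and similarly at the ends). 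Now pick, for $0<i<n$, a power $u_i$ of $a_i$ of prime order, and set $u_0=x$, $u_n=y$. The core is the following claim about cyclic groups: \emph{if $C$ is a finite cyclic group and $u,v\in C$ have prime orders, with $|C|>o(u)$ in the case $\langle u\rangle=\langle v\rangle$, then inside $C$ there is a walk $u=z_0,z_1,\dots,z_{2m}=v$ with each $z_{2j}$ of prime order, $o(z_{2j+1})=o(z_{2j})o(z_{2j+2})$, and consecutive terms adjacent}. For distinct prime orders $p\ne q$ this is witnessed by the element of order $pq$ in $C$ (of which both $u$ and $v$ are powers), giving $m=1$; when $\langle u\rangle=\langle v\rangle$ one routes through an element of order $o(u)^2$, or, if $o(u)^2\nmid|C|$, through a third prime divisor of $|C|$, giving $m=1$ or $m=2$. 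Applying the claim inside $C_{a_ia_{i+1}}$ to the pair $u_i,u_{i+1}$ — the side condition being supplied by fact (ii) — yields a walk $W_i$ of even length and of the required local shape; concatenating $W_0,\dots,W_{n-1}$ gives a walk from $x$ to $y$, and since each $W_i$ has even length every $u_i$ lands at an even position, so the global walk still satisfies the prescribed parity and product conditions, including at the seams.

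I expect the main obstacle to be exactly this bookkeeping: checking that the side condition $|C|>o(u)$ is genuinely available — this is where minimality of the path is used, and where one must rule out the truly obstructed situation (as in $S_3$, whose proper power graph really is disconnected) — and verifying that the even-length local walks glue into a single walk meeting the precise alternating definition at every junction. The cyclic-group claim itself is a routine computation once the cases are laid out, and periodicity is needed only to guarantee that prime-order powers exist.
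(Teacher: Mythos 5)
The paper does not prove this statement at all: it is imported verbatim from \cite{jj} (Lemma 2.1) as a quoted tool, so there is no in-paper argument to compare yours against. Judged on its own, your proposal is correct. The ``if'' direction is exactly the routine splice through prime-order powers, using periodicity only for their existence. For the ``only if'' direction, the two points a careful reader would press are both fine: (1) your fact (ii) does hold along a shortest path, including at the ends, since if $\langle a_i\rangle=\langle a_{i+1}\rangle$ the neighbour on either side of the pair is adjacent to the other member, shortening the path (and for $n=1$ it would force $\langle x\rangle=\langle y\rangle$, excluded by hypothesis); and (2) the side condition $|C|>o(u)$ in your cyclic-group claim is genuinely available: $C=C_{a_ia_{i+1}}$ is the larger of two \emph{distinct} nested nontrivial cyclic subgroups, hence has composite order, and since $o(u)=p$ divides $|C|$ this gives $|C|>p$. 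Inside the cyclic group the case analysis (element of order $pq$ for distinct primes; order $p^2$, or a detour through a second prime divisor of $|C|$, when $\langle u\rangle=\langle v\rangle$) is correct, all constructed consecutive vertices have different orders and so are genuinely adjacent, and because each local walk has even length the concatenation keeps prime-order elements at even positions and satisfies $o(x_{2i+1})=o(x_{2i})o(x_{2i+2})$ across the seams. One cosmetic slip: ``third prime divisor'' should read ``a prime divisor of $|C|$ other than $p$''; nothing else needs repair.
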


By the above theorem, it can seen  that if $|\pi(Z(G))|\geq 2$, then $P^*(G)$ is connected, where $\pi(G)$ is the set of
all prime numbers $p$ such that $G$ has an element of order $p$.
(For suppose that $p,q\in\pi(Z(G))$. For any prime $r\in\pi(G)$, if $r\ne p$,
there is a path between any element of order $r$ and any central element of
order $p$; then it follows that all elements of prime order are in a single
component, so $P^*(G)$ is connected.)

Also the proper power graph of a $p$-group $G$ is connected if and only if
$G$ has exactly one subgroup of order $p$.
Moreover, if $G$ is a finite  $p$-group,  then $P^*(G)$ is connected if and
only if $G$ is cyclic or a generalized quaternion group $Q_{2^n}$.

We have the following result for the infinite case.

\begin{theorem}
Let $G$ be an infinite locally finite $p$-group. Then $P^*(G)$ is connected if and only if
$G\cong C_{p^{\infty}}$ for some prime number $p$, or
$G\cong Q_{2^{\infty}}$ where
$Q_{2^{\infty}}=\displaystyle{\bigcup_{i\geq3}Q_{2^i}}$.
\label{t:locfin}
\end{theorem}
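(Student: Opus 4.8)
The plan is to analyze the structure of an infinite locally finite $p$-group $G$ through its countable (indeed, well-ordered by inclusion if possible) chain of finite subgroups, and to reduce connectivity of $P^*(G)$ to a condition on the number of subgroups of order $p$. First I would establish the ``only if'' direction by contrapositive: if $G$ has two distinct subgroups of order $p$, say $\langle x\rangle$ and $\langle y\rangle$, then $x$ and $y$ lie in no common finite cyclic or generalized quaternion subgroup (since such subgroups have a unique subgroup of order $p$), and more importantly any element $z$ adjacent to $x$ in $P^*(G)$ has $\langle z\rangle$ containing the unique subgroup of order $p$ inside $\langle z\rangle$, which must be $\langle x\rangle$ if $x$ is a power of $z$. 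Running along a hypothetical path from $x$ to $y$ and tracking which order-$p$ subgroup sits at the bottom of each $\langle a_i\rangle$, one sees the order-$p$ subgroup can never change, so no path exists; hence $P^*(G)$ disconnected. So a necessary condition is: $G$ has a unique subgroup of order $p$. I would then quote (or re-derive from the finite case already stated in the excerpt, via a direct limit argument) that a locally finite $p$-group with a unique subgroup of order $p$ is either quasicyclic $C_{p^\infty}$ or the union $Q_{2^\infty}=\bigcup_{i\ge 3}Q_{2^i}$; this is the classical extension of the Burnside classification to locally finite groups.

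The key steps in order: (1) show ``$P^*(G)$ connected $\Rightarrow$ $G$ has a unique subgroup of order $p$'', by the path-tracking argument above, using Lemma~\ref{con} to know $G$ is periodic and the fact that in a finite $p$-subgroup containing a given element, the order-$p$ subgroup below it is unique. (2) Invoke the classification of locally finite $p$-groups with a unique subgroup of order $p$ to conclude $G\cong C_{p^\infty}$ or $G\cong Q_{2^\infty}$. (3) For the converse, handle $C_{p^\infty}$ directly: its power graph is complete (already noted in the excerpt), so $P^*(C_{p^\infty})$ is complete, hence connected. (4) For $Q_{2^\infty}$, show connectivity by exhibiting, for any two nontrivial elements $x,y$, a finite $i$ with $x,y\in Q_{2^i}$, and then using that $P^*(Q_{2^i})$ is connected (stated in the excerpt for finite generalized quaternion groups); since a path in a subgraph is a path in the whole graph, $P^*(Q_{2^\infty})$ is connected.

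For step (4) I would want to be slightly careful that $Q_{2^\infty}$ really is locally finite and that every finite subset lies in some $Q_{2^i}$: this follows because $Q_{2^i}\le Q_{2^{i+1}}$ is an ascending chain with union $Q_{2^\infty}$, so any finitely many elements lie in a common term. I would also double-check that $Q_{2^\infty}$ genuinely has a unique subgroup of order $2$, consistent with step (1): the element $a^{2^{n-2}}=b^2$ of order $2$ in each $Q_{2^n}$ is the same across the chain (it is the square of $b$), and it is the unique involution in each $Q_{2^n}$, hence the unique involution in the union.

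The main obstacle is step (2): the classification of locally finite $p$-groups all of whose subgroups of order $p$ coincide (equivalently, with a unique subgroup of order $p$) as exactly $C_{p^\infty}$ and $Q_{2^\infty}$. For $p$ odd this follows relatively cleanly: every finite subgroup has a unique subgroup of order $p$, hence by Burnside's theorem is cyclic (generalized quaternion is excluded for odd $p$), and an ascending union of finite cyclic $p$-groups with a common bottom is $C_{p^\infty}$. For $p=2$ one must rule out mixed behavior and show the chain of finite subgroups is eventually all quaternion or all cyclic, and that a chain of generalized quaternion groups $Q_{2^{i}}$ (each the unique such overgroup of the previous, up to the shared involution) has union $Q_{2^\infty}$; this needs the fact that $Q_{2^n}$ embeds in $Q_{2^{n+1}}$ essentially uniquely and that no other $2$-group with a unique involution contains a given $Q_{2^n}$ as a subgroup of index $2$. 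This is standard (it is in Kegel--Wehrfritz or can be cited from the finite Burnside theorem plus a direct-limit argument), so I would state it as a known structural fact with a citation rather than reprove it, and spend the written proof on steps (1), (3), (4), which are elementary and self-contained given the results already in the excerpt.
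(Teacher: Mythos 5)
Your proposal is correct and follows essentially the same route as the paper: connectivity of $P^*(G)$ for a locally finite $p$-group is reduced to the existence of a unique subgroup of order $p$ (which the paper extracts from Theorem~\ref{ham} and you re-derive directly by tracking the unique order-$p$ subgroup of each $\langle a_i\rangle$ along a path), and the classification of such groups as $C_{p^\infty}$ or $Q_{2^\infty}$ is obtained, as in the paper, from Burnside's theorem on finite $p$-groups with a unique subgroup of order $p$ together with the locally finite union argument. Your handling of the converse ($C_{p^\infty}$ complete, paths inside some $Q_{2^i}$) and of the $p=2$ direct-limit subtlety is, if anything, more explicit than the paper's.
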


\begin{proof}
By Theorem \ref{ham}, if $P^*(G)$ is connected, then $G$ has a unique subgroup
of order $p$. Hence $G$ is isomorphic to $Q_{2^{\infty}}$ or $C_{p^{\infty}}$.

Conversely, it is obvious that if $G$ is abelian then $G\cong C_{p^{\infty}}$.
Now we assume that $G$ is non-abelian. A theorem of Burnside (see
\cite[Theorem 12.5.2]{hall}) shows that a finite $2$-group with unique
subgroup of order~$2$ is generalized quaternion. Since $G$ is locally finite
it is a union of finite generalized quaternion groups, and so is 
$Q_{2^\infty}$. \qed
\end{proof}

Now we give a general class of examples.

\begin{example}
Let $G=\langle A,t\rangle$, where $A$ is an abelian torsion group of exponent
greater than $2$ and $t$ is an element of order $2$ inverting $A$. (This
includes dihedral groups of order greater than $4$.) Then $G$ contains
non-central involutions, for example $t$; these are isolated vertices in
$P^*(G)$. We note that the center of $G$ is a $2$-group.
\end{example}

Finally we return to the question of vertices joined to everything in infinite
groups. Our result is similar to the finite case.

\begin{theorem}
Let $G$ be an infinite group, and suppose that $x\in G$ has the property that
for any $y\in G$, either $y$ is a power of $x$ or \emph{vice versa}. Assume
that $x\ne1$. Then one of the following holds:
\begin{enumerate}\itemsep0pt
\item $G$ is infinite cyclic, and $x$ is a generator;
\item $G=C_{p^\infty}$ for some prime $p$, and $x$ is arbitrary;
\item $G=Q_{2^\infty}$ and $x$ has order~$2$.
\end{enumerate}
\end{theorem}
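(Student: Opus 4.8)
The plan is to split, via Lemma~\ref{con}, into the torsion-free and periodic cases, dispatch the first elementarily, and reduce the second to Theorem~\ref{t:locfin}. Note first that, since $x\neq 1$ and every other element is power-comparable to $x$, the vertex $x$ is adjacent in $P^*(G)$ to every other vertex, so $P^*(G)$ is connected; hence by Lemma~\ref{con} the group $G$ is torsion-free or periodic. Note also that $x$ commutes with every element (each $y$ is a power of $x$ or has $x$ as a power), so $x\in Z(G)$, and that the hypothesis on $x$ passes to any subgroup of $G$ containing $x$.

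In the torsion-free case $o(x)=\infty$, and I would argue directly that $G=\langle x\rangle$, which is case~(a). If not, choose $w\in G\setminus\langle x\rangle$; by hypothesis $x\in\langle w\rangle$, say $x=w^{k}$ with $|k|\geq 2$, and replacing $w$ by $w^{-1}$ if necessary we may take $k\geq 2$. Pick a prime $p\mid k$ and set $y=w^{k/p}$, so $y^{p}=x$ while $y\notin\langle x\rangle$ (as $k\nmid k/p$ in the infinite cyclic group $\langle w\rangle$). Then for any prime $q\neq p$ the element $y^{q}\in G$ is nontrivial, $y^{q}\notin\langle y^{p}\rangle=\langle x\rangle$ (since $p\nmid q$), and $x=y^{p}\notin\langle y^{q}\rangle$ (since $q\nmid p$); so $y^{q}$ is incomparable to $x$, contradicting the hypothesis. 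Hence $G=\langle x\rangle$ is infinite cyclic with generator $x$.

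Now suppose $G$ is periodic and write $o(x)=N\geq 2$. Since $G$ is infinite while $\langle x\rangle$ is finite, there is $w\in G\setminus\langle x\rangle$, and then $x\in\langle w\rangle$, so $\langle w\rangle$ is a finite \emph{cyclic} group in which $x$ — which is not a generator of $\langle w\rangle$ — satisfies the hypothesis of Theorem~\ref{t:joined_all}. The only conclusion of that theorem compatible with $\langle w\rangle$ cyclic and $x$ a non-generator is that $\langle w\rangle$, hence $o(w)$, hence $N$, is a prime power $p^{a}$. Running this for every $w\notin\langle x\rangle$ (the prime is forced to be $p$ since $\langle x\rangle\leq\langle w\rangle$) shows every element of $G$ has $p$-power order, so $G$ is a $p$-group; and any element of order $p$, being power-comparable to $x$, lies in $\langle x\rangle$, so $G$ has a unique subgroup of order $p$. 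Combined with the connectedness of $P^{*}(G)$, Theorem~\ref{t:locfin} then yields $G\cong C_{p^{\infty}}$ or $G\cong Q_{2^{\infty}}$ — \emph{provided} $G$ is locally finite.

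That last point is the expected main obstacle: ruling out a finitely generated infinite $p$-group with a unique subgroup of order $p$ under our hypotheses. I would try to extract this from the strong constraints at hand — $\langle x\rangle$ is central, $\langle x,g\rangle$ is cyclic for every $g$, and hence every abelian subgroup of $G$ is cyclic or of type $C_{p^{\infty}}$ — together with known results on periodic groups all of whose abelian subgroups are Chernikov; alternatively one may first pass to the central quotient $G/\langle x^{p^{a-1}}\rangle$ (observing that the hypotheses, hence the conclusion, are inherited) and reduce to the case $o(x)=p$, where $\langle x\rangle$ is the unique minimal subgroup of $G$. Granting local finiteness, it remains only to pin down $x$: in $C_{p^{\infty}}$ the power graph is complete, so $x$ is arbitrary (case~(b)); in $Q_{2^{\infty}}$ any element of order at least $4$ admits a non-power of order $4$ (as in a finite generalized quaternion group, since $Q_{2^{\infty}}$ has more than one cyclic subgroup of order $4$), so the universal vertex $x$ must be the unique involution (case~(c)). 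Together with the torsion-free case this completes the argument.
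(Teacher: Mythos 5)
Your overall route is the paper's route: use the hypothesis to see that $x$ is adjacent to every other vertex, so $P^*(G)$ is connected and Lemma~\ref{con} splits the problem into the torsion-free and periodic cases; settle the torsion-free case by hand; in the periodic case apply Theorem~\ref{t:joined_all} inside the finite cyclic subgroups $\langle w\rangle\ni x$ to force $o(x)$ to be a $p$-power, every element to have $p$-power order and the subgroup of order $p$ to be unique, and then invoke Theorem~\ref{t:locfin}. Your torsion-free argument (producing $y$ with $y^p=x$ and then the incomparable element $y^q$) is correct and a little more computational than the paper's, which simply notes that in the infinite cyclic group $\langle y\rangle$ only the identity and the generators are comparable to everything, so $\langle x\rangle=\langle y\rangle$; your identification of $x$ as the involution in the $Q_{2^\infty}$ case is also fine.

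The genuine gap is the one you flag yourself: local finiteness of $G$ in the periodic case, which Theorem~\ref{t:locfin} requires as a hypothesis. You are right that ``every element has $p$-power order'' is not by itself enough (that is a Burnside-type issue), but your proposal does not close it: the appeal to results on periodic groups whose abelian subgroups are Chernikov is circular, since those classification theorems are themselves stated for locally finite groups, and the reduction to $o(x)=p$ does not remove the difficulty. For comparison, the paper disposes of this point in a single sentence: from the fact that every $y\notin\langle x\rangle$ generates a finite cyclic ($p$-)group containing $x$, it concludes directly that $G$ is locally a finite $p$-group and then applies Theorem~\ref{t:locfin}; it does not deploy (or need, in its own account) the heavier machinery you sketch. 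So as submitted your argument is incomplete at exactly this step: either justify the inference from ``every element lies in a finite cyclic subgroup comparable with $\langle x\rangle$'' to local finiteness, or supply some other complete argument, rather than leaving it as ``granting local finiteness''.
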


\begin{proof}
The hypothesis implies that $P^*(G)$ (obtained by deleting the identity from
$P(G)$) is connected; so by Lemma~\ref{con}, $G$ is either torsion-free or
a torsion group.

Suppose that $G$ is torsion-free; we claim that $G=\langle x\rangle$. If not,
take $y\notin\langle x\rangle$. Then $y$ is not a power of $x$, so $x$ is a
power of $y$. But the only elements in a cyclic group which are joined to all
elements are the identity and the generators; so
$\langle x\rangle=\langle y\rangle$, a contradiction.

Now suppose that $G$ is a torsion group. If $x$ does not have prime power order
then an element $y\notin\langle x\rangle$ would satisfy 
$x\in\langle y\rangle$, a contradiction as before. So we may assume that $x$
has prime power order. Every element $y\notin\langle x\rangle$ generates
a cyclic group containing $x$; so $G$ is locally a finite
$p$-group. Now Theorem~\ref{t:locfin} gives the result. \qed
\end{proof}

Now we can, as before, decide whether deleting all vertices which are joined
to everything leaves a connected graph:
\begin{enumerate}\itemsep0pt
\item If $G=\langle x\rangle$ is infinite cyclic, then $P(G)\setminus\{1,x^{\pm1}\}$ is connected with diameter~$2$, since $x^m$ and $x^n$ are both joined to
$x^{mn}$.
\item If $G=C_{p^\infty}$, then $P(G)$ is complete.
\item If $G=Q_{2^\infty}$ and $x$ has order~$2$, then $P(G)\setminus\{1,x\}$
consists of an infinite complete graph and infinitely many disjoint edges.
\end{enumerate}

%%%%%%%%%%%%%%%%%%%%%%%%%%%%%%%%%%%%%%%%%%%%%%%%%%%%%%%%%%%%%%%%%%%%%%%%%%%


\begin{thebibliography}{99}

\bibitem{ali}
G. Aalipour, S. Akbari, P. J. Cameron, R. Nikandish, F. Shaveisi,
On the structure of the power graph and the enhanced power graph of a group,
\textit{Electronic J. Combinatorics} \textbf{24(3)} (2017), paper 3.16.

\bibitem{r1}
J. Abawajy, A. Kelarev and M. Chowdhury, Power graphs: A Survey,
\textit{Electronic J. Graph Theory Appl.} \textbf{1(2)} (2013), 125--147.

\bibitem {ad}
S. I. Adian, On  some  torsion-free  groups,
\textit{Izv. Akad.Nauk SSSR Ser.Mat.} \textbf{35} (1971), 459--468.

\bibitem{cgj}
P. J. Cameron, H. Guerra and \v{S}. Jurina,
The power graph of a torsion-free group,
\textit{J. Algebraic Combinatorics} \textbf{49} (2019), 83--98.

\bibitem{r2}
I. Chakrabarty, S. Ghosh and M.K. Sen,
Undirected power graphs of semigroups,
\textit{Semigroup Forum} \textbf{78(3)} (2009), 410--426.

\bibitem{gp}
Michael Giudici and Chris Parker,
There is no upper bound for the diameter of the commuting graph of a finite
group,
\textit{J. Combinatorial Theory} (A) \textbf{120} (2013), 1600--1603.

\bibitem{hall}
M. Hall, Jr., \textit{The Theory of Groups}, Macmillan, New York, 1959.

\bibitem{jj}
S. H. Jafari, Some results in a new power graphs in finite groups,
\textit{Utilitas Mathematica}, to appear.

\bibitem{r4}
A.V. Kelarev and S.J. Quinn,
A combinatorial property and power graphs of groups,
\textit{Contributions to general algebra}, \textbf{12} (Vienna, 1999),
229--235, Heyn, Klagenfurt (2000).

\bibitem{lov}
L. Lov\'asz,
Normal hypergraphs and the perfect graph conjecture,
\textit{Discrete Math.} \textbf{2(3)} (1972), 253--267.

\bibitem{mp}
G. L. Morgan and C. W. Parker,
The diameter of the commuting graph of a finite group with trivial centre,
\textit{J. Algebra} \textbf{393} (2013), 41--59.

\bibitem{obz}
V. N. Obraztsov,
Simple torsion-free groups in which the intersection of
any two non-trivial subgroups is non-trivial,
\textit{J. Algebra} \textbf{199} (1998), 337--343.

\end{thebibliography}
\end{document}